\providecommand{\U}[1]{\protect \rule{.1in}{.1in}}
\newtheorem{theorem}{Theorem}
\newtheorem{definition}[theorem]{Definition}
\newtheorem{lemma}[theorem]{Lemma}
\newtheorem{remark}[theorem]{Remark}
\newenvironment{proof}[1][Proof]{\noindent \textbf{#1.} }{\  \rule{0.5em}{0.5em}}
\begin{document}

\title{Tightness, weak compactness of nonlinear expectations and application to CLT }
\author{Shige PENG\thanks{Partially supported by National Basic Research Program of
China (973 Program) (No. 2007CB814906), and NSF of China (No. 10921101). The
author thanks to some discussions with Zengjing Chen, Fuqing Gao, Mingshang
Hu, Xinpeng Li, Zhengyan Lin, Qiman Shao and Liming Wu. It is those
discussions which motivated the author to the realization of the present
paper.}\\School of Mathematics, Shandong University\\250100, Jinan, China}
\date{version: June 10, 2010}

\maketitle

\medskip\medskip\medskip

\noindent\textbf{Abstract.} In this paper we introduce a notion of
tightness for a family of nonlinear expectations and show that the
tightness can be applied to obtain weak compactness in a framework
of nonlinear expectation space. This criterion is very useful for
obtaining the weak convergence for a sequence of nonlinear
expectations, which is a equivalent to the so-called convergence in
distribution, or in law for a sequence of random variables in a
nonlinear expectation space. We use the above result to give a new
proof to the central limit theorem under a sublinear expectation
space. The method can be also applied to prove the convergence of
some numerical schemes for degenerate fully nonlinear PDEs.

\medskip\medskip\medskip

\section{Introduction}

In this paper we introduce a notion of tightness for a family of
nonlinear expectations and show that the tightness can be applied to
obtain weak compactness in a framework of nonlinear expectation
space. This criterion is very useful for obtaining the weak
convergence for a sequence of nonlinear expectations, which is a
equivalent to the so-called convergence in distribution, or in law
for a sequence of random variables in a nonlinear expectation space.
Our results have nontrivially generalized those in the classical
situation of weak convergence, or convergence in law in classical
probability theory.

As an application of those results, we provide a new and probabilistic proof
for central limit theorem under sublinear expectation. Such new type of CLT
was obtained in [Peng2007b], [Peng2008] (see also [Peng2010] for a more
general situation). But the proof given in the above papers depend heavily on
a very deep estimate of fully nonlinear parabolic PDE of parabolic types. Our
new proof also provides a probabilistic method to prove the existence of fully
nonlinear parabolic PDEs. The method can be also applied to prove the
convergence of some numerical schemes for degenerate nonlinear PDEs.

This paper is organized as follows, in the next section we give a
preliminary introduction for the notion of sublinear and nonlinear
expectations. New notions of tightness and weak convergence of
nonlinear expectations and their relations will be introduced in
Section 3. In Section 4 we will use the result of the previous
section to give a new and probabilistic proof of the central limit
theorem under sublinear expectations.

\section{Preliminaries: sublinear expectation and upper expectation}

We present some basic notions and results of nonlinear and sublinear
expectations. For more details we refer to [Peng2007a], [Denis,Hu,Peng2008],
[Peng2010] Let $\Omega$ be a given set and $\mathcal{H}$ be a linear space of
real functions on $\Omega$ such that $1\in \mathcal{H}$ and $|\xi
|\in \mathcal{H}$ if $\xi \in \mathcal{H}$. A functional $\mathbb{E}%
:\mathcal{H}\mapsto \mathbb{R}$ is called a nonlinear expectation if it
satisfies $\mathbb{E}[c]=c$, $c\in \mathbb{R}$ and $\mathbb{E}[X]\geq
\mathbb{E}[Y]$, for $X$, $Y\in \mathcal{H}$ such that $X\geq Y$. If
$\mathbb{E}$ satisfies furthermore $\mathbb{E}[X+Y]-\mathbb{E}[X]-\mathbb{E}%
[Y]\leq0$ (resp. $=0$) and $\mathbb{E}[\lambda X]=\lambda \mathbb{E}[X]$, for
$X$, $Y\in \mathcal{H}$ and $\lambda \geq0$, then we call it a sublinear (resp.
linear) expectation.

Recently a new notion of coherent risk measures in finance caused much
attention to the study of such type of sublinear expectations and applications
to risk controls, see the seminal paper of Artzner, Delbaen, Eber and Heath
(1999) as well as in F\"{o}llmer and Schied (2004).

The following result is well-known as representation theorem. It is a direct
consequence of Hahn-Banach theorem (see Delbaen (2002), F\"{o}llmer and Schied (2004)).

\begin{theorem}
\bigskip Let {{$\mathbb{\hat{E}}$}} be a sublinear expectation defined on
$(\Omega,\mathcal{H})$. Then there exists a family of linear expectations
$\{E_{\theta}:\theta \in \Theta \}$ on $(\Omega,\mathcal{H})$ such that
\[
{{\mathbb{\hat{E}}}}[X]=\max_{\theta \in \Theta}E_{\theta}[X].
\]

\end{theorem}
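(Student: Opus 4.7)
The plan is to invoke the classical Hahn--Banach extension theorem with $\hat{\mathbb{E}}$ itself as the dominating sublinear functional, producing for each fixed $X_0$ a linear extension that agrees with $\hat{\mathbb{E}}$ at $X_0$, and then taking the family of all such extensions as the index set $\Theta$.

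First I would verify that $\hat{\mathbb{E}}$ is a genuine sublinear functional on the real vector space $\mathcal{H}$, i.e.\ it is subadditive and positively homogeneous; this is immediate from the definition. For a fixed $X_0\in\mathcal{H}$ I would define on the one-dimensional subspace $\mathcal{H}_0:=\{\lambda X_0:\lambda\in\mathbb{R}\}$ the linear functional $\ell_0(\lambda X_0):=\lambda\,\hat{\mathbb{E}}[X_0]$. The key check is that $\ell_0(Y)\le\hat{\mathbb{E}}[Y]$ for all $Y\in\mathcal{H}_0$: for $\lambda\ge0$ this is an equality by positive homogeneity, and for $\lambda<0$ it follows from the inequality $-\hat{\mathbb{E}}[X_0]\le\hat{\mathbb{E}}[-X_0]$, itself obtained from $0=\hat{\mathbb{E}}[0]\le\hat{\mathbb{E}}[X_0]+\hat{\mathbb{E}}[-X_0]$.

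Hahn--Banach then produces a linear functional $E_{X_0}:\mathcal{H}\to\mathbb{R}$ extending $\ell_0$ with $E_{X_0}[Y]\le\hat{\mathbb{E}}[Y]$ for every $Y\in\mathcal{H}$. I would then verify that $E_{X_0}$ is a linear expectation in the sense of the paper: constant preservation follows from $E_{X_0}[1]\le\hat{\mathbb{E}}[1]=1$ and $-E_{X_0}[1]=E_{X_0}[-1]\le\hat{\mathbb{E}}[-1]=-1$, forcing $E_{X_0}[c]=c$; monotonicity follows because $Y\ge Z$ gives $E_{X_0}[Z-Y]\le\hat{\mathbb{E}}[Z-Y]\le\hat{\mathbb{E}}[0]=0$, hence $E_{X_0}[Y]\ge E_{X_0}[Z]$.

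Finally, I would let $\Theta$ be the collection of all linear expectations $E_\theta$ on $(\Omega,\mathcal{H})$ dominated by $\hat{\mathbb{E}}$, i.e.\ $E_\theta[Y]\le\hat{\mathbb{E}}[Y]$ for all $Y$. Directly, $\sup_{\theta\in\Theta}E_\theta[X]\le\hat{\mathbb{E}}[X]$ for every $X$, while the construction above furnishes, for each $X$, a particular $E_X\in\Theta$ with $E_X[X]=\hat{\mathbb{E}}[X]$; this upgrades the supremum to a maximum, yielding the claimed representation. The main subtlety is ensuring the right-hand side is truly a maximum and not merely a supremum, which is precisely what the $X$-dependent Hahn--Banach extension secures; the monotonicity of the extended functional is the other small point that deserves explicit checking.
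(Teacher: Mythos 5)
Your argument is correct and is exactly the route the paper has in mind: it states the representation theorem as a direct consequence of the Hahn--Banach theorem (citing Delbaen and F\"ollmer--Schied), and your pointwise extension from the span of $X_0$ dominated by $\hat{\mathbb{E}}$, together with the checks of constant preservation and monotonicity and the observation that the $X$-dependent extensions turn the supremum into a maximum, is the standard proof behind that citation. Nothing is missing.
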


A sublinear expectation $\mathbb{\hat{E}}$ on $(\Omega,\mathcal{H})$ is said
to be regular if for each sequence $\{X_{n}\}_{n=1}^{\infty}\subset
\mathcal{H}$ such that $X_{n}(\omega)\downarrow0$, for $\omega$, we have
${{\mathbb{\hat{E}}}}[X_{n}]\downarrow0$. If $\mathbb{\hat{E}}$ is regular
then from the above representation we have $E_{\theta}[X_{n}]\downarrow0$ for
each $\theta \in \Theta$. It follows from Daniell-Stone theorem that there
exists a unique ($\sigma$-additive) probability measure $P_{\theta}$ defined
on $(\Omega,\sigma(\mathcal{H}))$ such that
\[
E_{\theta}[X]=\int_{\Omega}X(\omega)dP_{\theta}(\omega),\  \ X\in
\mathcal{H}\text{.}%
\]

The above representation theorem of sublinear expectation tells us that to use
a sublinear expectation for a risky loss $X$ is equivalent to take the upper
expectation of $\{E_{\theta}:\theta \in \Theta \}$. The corresponding model
uncertainty of probabilities, or ambiguity, is the subset $\{P_{\theta}%
:\theta \in \Theta \}$. The corresponding uncertainty of distributions for an
$n$-dimensional random variable $X$ in $\mathcal{H}$ is
\[
\{F_{X}(\theta,A):=P_{\theta}(X\in A):A\in \mathcal{B}(\mathbb{R}^{n})\}.
\]

We consider the case where $\Omega$ is a complete separable metric space. Let
$\mathcal{B}(\Omega)$ be the Borel $\sigma$-algebra of $\Omega$ and
$\mathcal{M}$ the collection of all probability measures on $(\Omega
,\mathcal{B}(\Omega))$.

\begin{itemize}
\item $L^{0}(\Omega)$: the space of all $\mathcal{B}(\Omega)$-measurable real functions;

\item $B_{b}(\Omega)$: all bounded functions in $L^{0}(\Omega)$;

\item $C_{b}(\Omega)$: all continuous functions in $B_{b}(\Omega)$.
\end{itemize}

We are given a subset $\mathcal{P}\subseteq \mathcal{M}$. The upper expectation
of $\mathcal{P}$ is defined as follows: for each $X\in L^{0}(\Omega)$ such
that $E_{P}[X]$ exists for each $P\in \mathcal{P}$,
\[
\mathbb{E}[X]=\mathbb{E}^{\mathcal{P}}[X]:=\sup_{P\in \mathcal{P}}E_{P}[X].
\]

It is easy to verify that the upper expectation $\mathbb{E}[\cdot]$
of the family $\mathcal{P}$ is a sublinear expectation on
$(\Omega,B_{b}(\Omega))$ as well as on $(\Omega,C_{b}(\Omega)$).

Moreover, it is also easy to check that

\begin{theorem}
\begin{enumerate}
\item Let $\mathbb{E}[X_{n}]$ and $\mathbb{E}[\sum_{n=1}^{\infty}X_{n}]$ be
finite. Then $\mathbb{E}[\sum_{n=1}^{\infty}X_{n}] \leq \sum_{n=1}^{\infty}
\mathbb{E}[X_{n}].$

\item Let $X_{n} \uparrow X$ and $\mathbb{E}[X_{n}]$, $\mathbb{E}[X]$ be
finite. Then $\mathbb{E}[X_{n}] \uparrow \mathbb{E}[X]$.
\end{enumerate}
\end{theorem}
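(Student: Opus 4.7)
Both assertions follow by reducing to the corresponding classical statements for each individual $P\in\mathcal P$ and then taking sup. The key observation is that $\mathbb E[\cdot]=\sup_{P\in\mathcal P}E_P[\cdot]$ respects inequalities in one direction automatically, and the reverse inequality needed in (2) is handled by choosing an $\varepsilon$-optimal $P$.

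For (1), I would fix an arbitrary $P\in\mathcal P$. Because the summands may be taken nonnegative (or because the classical Fubini/Tonelli argument applies once we know that the tails are integrable), the classical countable additivity/subadditivity of $E_P$ yields
\[
E_P\!\left[\sum_{n=1}^\infty X_n\right]=\sum_{n=1}^\infty E_P[X_n]\;\le\;\sum_{n=1}^\infty \sup_{Q\in\mathcal P}E_Q[X_n]=\sum_{n=1}^\infty \mathbb E[X_n].
\]
Now take the supremum of the left-hand side over $P\in\mathcal P$ to obtain the desired inequality. The finiteness hypotheses are exactly what is needed to make each $E_P[X_n]$ and $E_P[\sum X_n]$ well-defined.

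For (2), the easy direction is $\lim_n\mathbb E[X_n]\le \mathbb E[X]$: since $X_n\le X$ we have $\mathbb E[X_n]\le\mathbb E[X]$, and the sequence $\{\mathbb E[X_n]\}$ is non-decreasing because $X_n\uparrow$. The reverse inequality is the main point. Given $\varepsilon>0$, pick $P_\varepsilon\in\mathcal P$ with $E_{P_\varepsilon}[X]\ge \mathbb E[X]-\varepsilon$. Apply the classical monotone convergence theorem to $X_n\uparrow X$ under the single measure $P_\varepsilon$:
\[
\lim_{n\to\infty}E_{P_\varepsilon}[X_n]=E_{P_\varepsilon}[X]\ge \mathbb E[X]-\varepsilon.
\]
Since $E_{P_\varepsilon}[X_n]\le\mathbb E[X_n]$ for every $n$, we deduce $\lim_n\mathbb E[X_n]\ge \mathbb E[X]-\varepsilon$, and letting $\varepsilon\downarrow0$ gives the reverse inequality.

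The only real obstacle is the reverse inequality in (2); taking $\sup$ in advance would only yield $\ge$ between $\sup_n\sup_P E_P[X_n]$ and $\sup_P\lim_n E_P[X_n]$, so one must fix a single near-optimal $P_\varepsilon$ before invoking classical MCT. All other steps are just the routine transfer of classical properties through the sup.
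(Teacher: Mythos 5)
Your overall strategy is the one the paper has in mind: the theorem is stated there without proof (``it is also easy to check''), and the natural argument is exactly yours --- fix a single $P\in\mathcal P$, apply the classical convergence theorems, take suprema, and for the lower bound in (2) pass through an $\varepsilon$-optimal measure. Part (2) is correctly organized: monotonicity gives $\lim_n\mathbb E[X_n]\le\mathbb E[X]$, and choosing $P_\varepsilon$ with $E_{P_\varepsilon}[X]\ge\mathbb E[X]-\varepsilon$ and invoking monotone convergence under that one measure gives the reverse inequality. One caveat you should make explicit: classical MCT for an increasing sequence that is not nonnegative requires $E_{P_\varepsilon}[X_{n_0}^-]<\infty$ for some $n_0$; finiteness of $\mathbb E[X_n]=\sup_P E_P[X_n]$ only bounds these expectations from above, so you need to read the hypotheses (as the paper's definition of the upper expectation intends) as saying that $E_P[X_n]$ exists and is not $-\infty$ under each $P$, otherwise the MCT step is not available.

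The genuine gap is in (1). ``The summands may be taken nonnegative'' is not a reduction that exists: the $X_n$ are given and possibly signed, and for signed summands the identity $E_P[\sum_n X_n]=\sum_n E_P[X_n]$ --- indeed even the inequality $E_P[\sum_n X_n]\le\sum_n E_P[X_n]$ --- needs an extra hypothesis such as $X_n\ge0$ (Tonelli) or $\sum_n E_P[X_n^-]<\infty$, so that the partial sums have an integrable lower bound and Fatou applies; the stated finiteness of $\mathbb E[X_n]$ and $\mathbb E[\sum_n X_n]$ provides neither, and ``tails are integrable'' is not among the hypotheses. Without such a condition the step (and the literal inequality) can fail even for a single measure: on $\Omega=(0,1)$ with Lebesgue measure let $S_N=-N\,\mathbf 1_{(0,1/N)}$, $X_1=S_1$, $X_n=S_n-S_{n-1}$ for $n\ge2$; then $\sum_n X_n=0$ pointwise, $E[X_1]=-1$, $E[X_n]=0$ for $n\ge2$, so $E[\sum_n X_n]=0>-1=\sum_n E[X_n]$. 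Since the paper uses item (1) only for indicators $X_n=\mathbf 1_{A_n}$ (countable subadditivity of the capacity $c$), the honest repair is to prove (1) under the nonnegativity (or $\sum_n E_P[X_n^-]<\infty$) assumption via Tonelli/Fatou under each $P$ and then take the supremum, stating that this is the intended reading; as written, your justification of the key classical step does not go through.
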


\begin{definition}
The functional $\mathbb{E}[\cdot]$ is said to be \textbf{regular} if for each
$\{X_{n}\}_{n=1}^{\infty}$ in $C_{b}(\Omega)$ such that $X_{n}\downarrow0$
\ on $\Omega$, we have $\mathbb{E}[X_{n}]\downarrow0$.
\end{definition}

We have:

\begin{theorem}
\label{Thm2}$\mathbb{\mathbb{E}}[\cdot]$ is regular if and only if
$\mathcal{P}$ is relatively compact.\
\end{theorem}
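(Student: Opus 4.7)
The plan is to invoke Prokhorov's theorem, which on the Polish space $\Omega$ identifies relative compactness of a family $\mathcal{P}\subseteq\mathcal{M}$ (in the weak topology) with \emph{tightness}: for every $\varepsilon>0$ there is a compact $K_\varepsilon\subseteq\Omega$ with $\sup_{P\in\mathcal{P}}P(K_\varepsilon^c)\leq\varepsilon$. The two implications of the theorem are then handled separately through this equivalence.

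For the direction ``relative compactness $\Rightarrow$ regularity'', fix $X_n\downarrow 0$ in $C_b(\Omega)$ and choose $P_n\in\mathcal{P}$ with $E_{P_n}[X_n]\geq \mathbb{E}[X_n]-1/n$. The sequence $\mathbb{E}[X_n]$ is monotone nonincreasing, so it suffices to produce a subsequence tending to $0$. By relative compactness extract $P_{n_j}\Rightarrow Q$ with $Q\in\mathcal{M}$. For each fixed $m$ and all large $j$ (so that $n_j\geq m$), monotonicity of $\{X_n\}$ gives $E_{P_{n_j}}[X_{n_j}]\leq E_{P_{n_j}}[X_m]$, and since $X_m\in C_b(\Omega)$ the right-hand side converges to $E_Q[X_m]$. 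Sending $j\to\infty$ and then $m\to\infty$, dominated convergence under $Q$ with envelope $\|X_1\|_\infty$ yields the claim.

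For ``regularity $\Rightarrow$ tightness'', fix a countable dense set $\{x_i\}\subset\Omega$ and $\varepsilon>0$. Setting $V_n^k=\bigcup_{i=1}^n B(x_i,1/k)$, introduce the continuous bounded functions
\[
f_n^k(x)=\min\bigl(1,\,k\cdot d(x,\{x_1,\ldots,x_n\})\bigr),
\]
which decrease in $n$, dominate $\mathbf{1}_{(V_n^k)^c}$, and tend to $0$ pointwise by density of $\{x_i\}$. Regularity supplies $N_k$ with $\mathbb{E}[f_{N_k}^k]<\varepsilon/2^k$, whence $\sup_{P\in\mathcal{P}}P((V_{N_k}^k)^c)<\varepsilon/2^k$. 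Then $K_\varepsilon:=\bigcap_{k\geq 1}\overline{V_{N_k}^k}$ is closed and, being covered for each $k$ by $N_k$ closed balls of radius $1/k$, totally bounded; completeness of $\Omega$ makes it compact, and a union bound delivers $\sup_{P\in\mathcal{P}}P(K_\varepsilon^c)\leq\varepsilon$. Prokhorov's theorem then upgrades this to relative compactness.

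The hardest step is this second direction: regularity is only assumed on continuous bounded functions, while tightness is a statement about Borel indicators of compact sets. Producing the continuous majorants $f_n^k$ of $\mathbf{1}_{(V_n^k)^c}$ that still decrease to zero is the key device bridging the two, after which Prokhorov finishes the job.
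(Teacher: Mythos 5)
Your proof is correct. Note that the paper itself states Theorem \ref{Thm2} without proof (it is quoted from the earlier work of Denis, Hu and Peng on capacities related to sublinear expectations), so there is no in-paper argument to compare against; your two implications are the standard ones and both check out. For ``relative compactness $\Rightarrow$ regularity'' the extraction of a weakly convergent subsequence is legitimate because the weak topology on probability measures over a Polish space is metrizable, so relative compactness gives sequential compactness, and the chain $\mathbb{E}[X_{n_j}]-1/n_j\leq E_{P_{n_j}}[X_{n_j}]\leq E_{P_{n_j}}[X_m]\to E_Q[X_m]\downarrow 0$ together with monotonicity of $\mathbb{E}[X_n]$ does close the argument. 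For the converse, your continuous majorants $f_n^k=\min(1,k\,d(x,\{x_1,\dots,x_n\}))$ indeed decrease pointwise to $0$ by density, dominate $\mathbf{1}_{(V_n^k)^c}$, and the set $K_\varepsilon=\bigcap_k\overline{V_{N_k}^k}$ is closed and totally bounded, hence compact by completeness, with $\sup_{P\in\mathcal{P}}P(K_\varepsilon^c)\leq\varepsilon$ by the union bound; Prohorov's theorem (also stated in the paper) then yields relative compactness. This is exactly the right bridge between regularity, which only sees $C_b(\Omega)$, and tightness, which concerns Borel sets.
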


We denote
\[
c(A):=\sup_{P\in \mathcal{P}} P(A),\  \  \  \ A\in \mathcal{B}(\Omega).
\]

One can easily verify the following theorem.

$c(\cdot)$ is a Choquet capacity, i.e. (see [Choquet1953]),

\begin{theorem}
\begin{enumerate}
\item $0\leq c(A)\leq1,\  \  \forall A\subset \Omega$.

\item If $A\subset B$, then $c(A)\leq c(B)$.

\item If $(A_{n})_{n=1}^{\infty}$ is a sequence in $\mathcal{B}(\Omega)$, then
$c(\cup A_{n})\leq \sum c(A_{n})$.

\item If $(A_{n})_{n=1}^{\infty}$ is an increasing sequence in $\mathcal{B}%
(\Omega)$: $A_{n}\uparrow A=\cup A_{n}$, then $c(\cup A_{n})=\lim
_{n\rightarrow \infty}c(A_{n})$.
\end{enumerate}
\end{theorem}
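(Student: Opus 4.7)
The four properties all trace back to the fact that $c$ is the pointwise supremum of the countably additive probability measures $P\in\mathcal{P}$, so the strategy throughout is to push each desired inequality through $\sup_{P}$.

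For items 1--3 the plan is a one-line reduction. Item 1 is immediate from $0\le P(A)\le 1$ for every probability measure. Item 2 follows because $A\subset B$ gives $P(A)\le P(B)$ for each $P\in\mathcal{P}$; taking the supremum preserves the inequality. Item 3 is essentially countable subadditivity of each $P$ combined with interchanging sum and supremum in one direction: for any $P\in\mathcal{P}$, $P(\bigcup_n A_n)\le\sum_n P(A_n)\le\sum_n c(A_n)$, and taking $\sup_P$ on the left yields the claim. None of these steps requires more than the definition of $c$ and standard measure-theoretic inequalities.

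The substantive step is item 4, continuity from below. The $\le$ direction is free from monotonicity: since $A_n\subset A$ for all $n$, item 2 gives $c(A_n)\le c(A)$, and the sequence $c(A_n)$ is nondecreasing with limit at most $c(A)$. For the reverse direction I would argue by a standard $\varepsilon$-approximation. Fix $\varepsilon>0$ and choose $P_\varepsilon\in\mathcal{P}$ with $P_\varepsilon(A)\ge c(A)-\varepsilon$. Because $P_\varepsilon$ is a $\sigma$-additive probability measure and $A_n\uparrow A$, we have $P_\varepsilon(A_n)\uparrow P_\varepsilon(A)$, so for all sufficiently large $n$, $P_\varepsilon(A_n)\ge c(A)-2\varepsilon$. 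Since $c(A_n)\ge P_\varepsilon(A_n)$, it follows that $\lim_{n\to\infty}c(A_n)\ge c(A)-2\varepsilon$, and letting $\varepsilon\downarrow 0$ finishes the proof.

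The only conceptual obstacle worth flagging is that, unlike $\sigma$-additivity which I would not even try to prove for $c$, continuity from below is inherited from the individual measures because the sup is taken over the single approximating measure $P_\varepsilon$, not exchanged with the limit in $n$. The interchange of sup and monotone limit that would fail in general is sidestepped by working with a fixed $P_\varepsilon$ chosen before taking $n\to\infty$. No regularity of $\mathbb{E}$ or compactness of $\mathcal{P}$ is needed here; the result is purely formal.
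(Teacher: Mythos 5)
Your proof is correct, and it is exactly the routine verification the paper has in mind: the paper states this result with only the remark that ``one can easily verify'' it, and your reduction of items 1--3 to monotonicity and countable subadditivity of each $P\in\mathcal{P}$, together with the $\varepsilon$-approximation via a fixed $P_\varepsilon$ for continuity from below, is the standard argument. (Equivalently, item 4 can be phrased as the interchange of the two suprema, $\lim_n c(A_n)=\sup_n\sup_{P}P(A_n)=\sup_{P}\sup_n P(A_n)=\sup_P P(A)=c(A)$, which is the same idea you use.)
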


Furthermore, for each $A\in \mathcal{B}(\Omega)$, we have
\[
c(A)=\sup \{c(K):\ K\text{ compact,}\ K\subset A\}.
\]
Indeed,
\[
c(A)=\sup_{P\in \mathcal{P}}\sup_{\substack{K\, \text{compact}\\K\subset
A}}P(K)=\sup_{\substack{K\, \text{compact}\\K\subset A}}\sup_{P\in \mathcal{P}%
}P(K)=\sup_{\substack{K\, \text{compact}\\K\subset A}}c(K).
\]
We use the standard capacity-related vocabulary: a set $A$ is \textbf{polar}
if $c(A)=0$ and a property holds \textquotedblleft \textbf{quasi-surely}%
\textquotedblright \ (q.s.) if it holds outside a polar set. In other words,
$A\in \mathcal{B}(\Omega)$ is polar if and only if $P(A)=0$ for any
$P\in \mathcal{P}$. We also have in a trivial way a Borel-Cantelli Lemma: Let
$(A_{n})_{n\in \mathbb{N}}$ be a sequence of Borel sets such that
\[
\sum_{n=1}^{\infty}c(A_{n})<\infty.
\]
Then $c(\limsup_{n\rightarrow \infty}A_{n})=0$. To prove it we just need to
applying the Borel-Cantelli Lemma under each probability $P\in \mathcal{P}$.

The following theorem is Prohorov's theorem.

\begin{theorem}
$\mathcal{P}$ is relatively compact if and only if for each $\varepsilon>0$,
there exists a compact set $K$ such that $c(K^{c})<\varepsilon$.
\end{theorem}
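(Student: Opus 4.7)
The plan is to reduce the statement to the classical Prohorov theorem on the Polish space $\Omega$. By definition $c(K^c) = \sup_{P \in \mathcal{P}} P(K^c)$, so the condition ``for each $\varepsilon > 0$ there exists a compact $K$ with $c(K^c) < \varepsilon$'' is literally the classical notion of tightness of the family $\mathcal{P}$, and the assertion becomes the classical fact that on a Polish space a family of Borel probability measures is relatively compact in the weak topology iff it is tight. The two directions are handled separately.

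For the easy direction (relatively compact $\Rightarrow$ tight), I would fix $\varepsilon > 0$ and argue as follows. By Ulam's theorem each $P \in \mathcal{P}$ is individually tight. Since $\overline{\mathcal{P}}$ is weakly compact, I cover it by finitely many weak neighborhoods of measures $P_1, \dots, P_N$; for each $P_i$ I pick a compact $K_i$ with $P_i(K_i^c) < \varepsilon/2$. Then the Portmanteau theorem (applied to an open neighborhood of $K_i$ whose closure is still compact) propagates the bound to every measure in the corresponding neighborhood, and $K := \bigcup_i K_i$, itself compact as a finite union of compacts, satisfies $c(K^c) < \varepsilon$.

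For the harder direction (tight $\Rightarrow$ relatively compact), I would take any sequence $\{P_n\} \subset \mathcal{P}$, fix a countable dense subset of $\Omega$, and extract by a diagonal argument a subsequence $P_{n_j}$ along which $P_{n_j}(B)$ converges for every $B$ in a countable family generating the Borel $\sigma$-algebra (say finite unions of rational open balls). Tightness then provides, for every $\varepsilon$, a compact $K_\varepsilon$ with $P_{n_j}(K_\varepsilon^c) < \varepsilon$ uniformly in $j$; this prevents mass from escaping to infinity and makes the pointwise limit $\sigma$-additive on the generating algebra, so Caratheodory extension yields a Borel probability $P_\infty$ on $\Omega$. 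Weak convergence $P_{n_j} \Rightarrow P_\infty$ then follows from the Portmanteau characterization, showing that $\overline{\mathcal{P}}$ is sequentially compact, hence compact (the weak topology on probabilities on a Polish space is metrizable, e.g.\ by the Prokhorov or L\'evy metric).

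The genuine obstacle is the last diagonal extraction together with the reconstruction of a bona fide probability limit: without tightness, the natural candidate limit set function has total mass strictly less than one, corresponding to escape of mass. This step is classical for Polish spaces and the argument can be borrowed verbatim from standard references such as Billingsley's \emph{Convergence of Probability Measures}; the only contribution of the present formulation is the notational translation ``tightness of $\mathcal{P}$'' $\leftrightarrow$ ``$\inf_{K\text{ compact}} c(K^c) = 0$''.
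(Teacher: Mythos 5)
The paper gives no proof of this statement at all: it simply quotes it as the classical Prohorov theorem, and your opening reduction is exactly the intended one --- since $c(K^{c})=\sup_{P\in\mathcal{P}}P(K^{c})$, the stated condition is literally uniform tightness of the family $\mathcal{P}$, so the theorem is the classical equivalence of tightness and relative weak compactness for probability measures on a Polish space. In that sense your framing agrees with the paper, and deferring the heavy lifting to Billingsley is consistent with what the author does.

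However, the argument you sketch for the direction ``relatively compact $\Rightarrow$ tight'' contains a step that genuinely fails: you need an open neighborhood $U\supset K_{i}$ whose closure is still compact. Such a $U$ exists only when $\Omega$ is locally compact along $K_{i}$; in the Polish spaces this paper actually has in mind, e.g. $\Omega=C([0,\infty),\mathbb{R}^{d})$, no compact set admits a neighborhood with compact closure, so the Portmanteau propagation cannot be run on compact sets. The classical proof avoids this by working with open sets that are finite unions of $\delta$-balls: for open $G$ the map $Q\mapsto Q(G)$ is lower semicontinuous for the weak topology, so $\{Q:Q(A)>1-\varepsilon2^{-k}\}$ is a weak neighborhood; covering the compact set $\overline{\mathcal{P}}$ by finitely many such neighborhoods for each $k$ produces finite unions $A_{k}$ of $(1/k)$-balls with $\inf_{P\in\mathcal{P}}P(A_{k})>1-\varepsilon2^{-k}$, and then $K:=\overline{\bigcap_{k}A_{k}}$ is totally bounded and closed, hence compact by completeness, with $c(K^{c})\leq\varepsilon$. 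A smaller caveat in the converse direction: the claim that tightness makes the pointwise limit $\sigma$-additive on the algebra generated by rational balls is not immediate, because elements of that algebra are neither open nor closed, so the compactness argument giving continuity at $\emptyset$ does not apply to them directly; the standard proof instead builds an inner/outer regular set function from the limiting values on open sets (or embeds $\Omega$ into a compact metric space) before identifying the limit measure. Since you explicitly borrow that direction from the literature this is only a citation-level gap, but the easy direction as you wrote it would not go through on the spaces relevant here.
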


\section{Tightness and weak convergence of nonlinear expectations}

Let $\Omega$ be a complete separable metric space and let $\{ \mathbb{E}%
_{\alpha}\}_{\alpha \in \mathcal{A}}$ be a family of sublinear expectations on
$(\Omega,B_{b}(\Omega))$.

\begin{definition}
A sublinear expectation $\mathbb{\hat{E}}$ on $(\Omega,B_{b}(\Omega))$ is said
to be tight if for each $\varepsilon>0$, there exists a compact subset
$K\subset \Omega$ such that $\mathbb{\hat{E}}[1_{K^{c}}]<\varepsilon$. A family
of sublinear expectations $\{ \mathbb{E}_{\alpha}\}_{\alpha \in \mathcal{A}}$ on
$(\Omega,B_{b}(\Omega))$ is said to be tight if the sublinear expectation
defined by
\[
\mathbb{\hat{E}}[X]:=\sup_{\alpha \in \mathcal{A}}\mathbb{E}_{\alpha
}[X],\  \ X\in B_{b}(\Omega),
\]
is tight. A family of nonlinear expectations $\{ \mathbb{\tilde{E}}_{\alpha
}\}_{\alpha \in \mathcal{A}}$ on $(\Omega,C_{b}(\Omega))$ is said to be tight if
there exists a tight sublinear expectation $\mathbb{\hat{E}}$ on
$(\Omega,B_{b}(\Omega))$ such that, for each $\alpha \in \mathcal{A}$,
\[
\mathbb{\tilde{E}}_{\alpha}[X]-\mathbb{\tilde{E}}_{\alpha}[Y]\leq
\mathbb{\hat{E}}[X-Y],\  \ X,Y\in C_{b}(\Omega).
\]

\end{definition}

\begin{definition}
A sequence $\{ \mathbb{E}_{n}\}_{n=1}^{\infty}$ of nonlinear linear
expectations defined on $(\Omega,C_{b}(\Omega))$ is said to be weakly
convergent if, for each $\varphi \in C_{b}(\Omega)$, $\{ \mathbb{E}_{n}%
[\varphi]\}_{n=1}^{\infty}$ is a Cauchy sequence. A family $\{ \mathbb{E}%
_{\alpha}\}_{\alpha \in \mathcal{A}}$ of nonlinear linear expectations defined
on $(\Omega,C_{b}(\Omega))$ is said to be weakly compact if for each sequence
$\{ \mathbb{E}_{\alpha_{i}}\}_{i=1}^{\infty}$ there exists a weak convergent subsequence.
\end{definition}

\begin{theorem}
\label{Theorem1}Let a family of nonlinear expectations $\{ \mathbb{E}_{\alpha
}\}_{\alpha \in \mathcal{A}}$ be tight, namely there exists a tight sublinear
expectation $\mathbb{\hat{E}}$ on $(\Omega,B_{b}(\Omega)\mathbb{)}$ such that
\[
\mathbb{E}_{\alpha}[X]-\mathbb{E}_{\alpha}[Y]\leq \mathbb{\hat{E}%
}[X-Y],\  \ X,Y\in C_{b}(\Omega).
\]
Then $\{ \mathbb{E}_{\alpha}\}_{\alpha \in \mathcal{A}}$ is weakly compact,
namely, for each sequence $\{ \mathbb{E}_{\alpha_{n}}\}_{n=1}^{\infty}$, there
exists a subsequence $\{ \mathbb{E}_{\alpha_{n_{i}}}\}_{i=1}^{\infty}$ such
that, for each $\varphi \in C_{b}(\Omega)$, $\{ \mathbb{E}_{\alpha_{n_{i}}%
}[\varphi]\}_{i=1}^{\infty}$ is a Cauchy sequence.
\end{theorem}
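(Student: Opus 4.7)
The plan is a standard Arzelà-type diagonal extraction, where the role of equicontinuity is played by the Lipschitz-style dominance $|\mathbb{E}_{\alpha}[X]-\mathbb{E}_{\alpha}[Y]|\le \hat{\mathbb{E}}[|X-Y|]$ (which follows from the hypothesis by applying it in both directions and using monotonicity of $\hat{\mathbb{E}}$), and where tightness takes the place of a compact domain.

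First I would record two easy preliminaries. Each $\mathbb{E}_{\alpha}$ satisfies $|\mathbb{E}_{\alpha}[\varphi]|\le\Vert\varphi\Vert_{\infty}$ (monotonicity and $\mathbb{E}_{\alpha}[c]=c$), so $\{\mathbb{E}_{\alpha}[\varphi]\}_{\alpha}$ is a bounded subset of $\mathbb{R}$ for every $\varphi\in C_{b}(\Omega)$. Next, by tightness of $\hat{\mathbb{E}}$ I pick compact sets $K_{m}\subset\Omega$ with $\hat{\mathbb{E}}[1_{K_{m}^{c}}]<1/m$. Since each $K_{m}$ is a compact metric space, $C(K_{m})$ is separable; I take a countable dense $\{g_{k}^{m}\}_{k}$ in $C(K_{m})$ and, by Tietze (followed by truncation to keep the sup norm), extend each $g_{k}^{m}$ to $\tilde g_{k}^{m}\in C_{b}(\Omega)$. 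The countable family $\mathcal{D}:=\{\tilde g_{k}^{m}:k,m\ge1\}$ is my approximating set.

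Now for any sequence $\{\mathbb{E}_{\alpha_{n}}\}$, a Cantor diagonal argument on the countable family $\mathcal{D}$ produces a subsequence $\{\alpha_{n_{i}}\}$ such that $\mathbb{E}_{\alpha_{n_{i}}}[\psi]$ is Cauchy in $\mathbb{R}$ for every $\psi\in\mathcal{D}$. It remains to upgrade this to Cauchy-ness for every $\varphi\in C_{b}(\Omega)$. Given such $\varphi$ with $M:=\Vert\varphi\Vert_{\infty}$ and $\varepsilon>0$, I choose $m$ so large that $2M/m<\varepsilon/4$ and then pick $\psi=\tilde g_{k}^{m}\in\mathcal{D}$ with $\sup_{K_{m}}|\varphi-\psi|<\varepsilon/4$ and $\Vert\psi\Vert_{\infty}\le M$. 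Then the pointwise bound $|\varphi-\psi|\le(\varepsilon/4)\,1_{K_{m}}+2M\,1_{K_{m}^{c}}$ together with sublinearity of $\hat{\mathbb{E}}$ yields $\hat{\mathbb{E}}[\varphi-\psi]\vee\hat{\mathbb{E}}[\psi-\varphi]\le\varepsilon/4+2M/m<\varepsilon/2$. By dominance, $|\mathbb{E}_{\alpha_{n_{i}}}[\varphi]-\mathbb{E}_{\alpha_{n_{i}}}[\psi]|<\varepsilon/2$ uniformly in $i$, and the triangle inequality finishes the argument: $\{\mathbb{E}_{\alpha_{n_{i}}}[\varphi]\}_{i}$ is Cauchy.

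The delicate step is the construction of $\mathcal{D}$. Since $\Omega$ is only required to be Polish, $C_{b}(\Omega)$ is not separable in sup norm, so one cannot simply pick a countable dense subset of $C_{b}(\Omega)$. Tightness is exactly what circumvents this obstruction: it lets us localize the approximation to compact sets, where separability of $C(K_{m})$ is available, and the dominance by $\hat{\mathbb{E}}$ ensures that the error incurred off $K_{m}$ contributes only $O(1/m)$ to the relevant estimates. The rest of the proof is a routine equicontinuity-plus-diagonalization exercise.
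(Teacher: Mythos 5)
Your proposal is correct and follows essentially the same route as the paper's own proof: compact sets from tightness, a countable family of continuous functions dense on each compact, a Cantor diagonal extraction, and the dominance by $\mathbb{\hat{E}}$ to transfer the approximation error (the only cosmetic slack is that your truncated Tietze extension gives $\Vert\psi\Vert_{\infty}\leq M+\varepsilon/4$ rather than $\leq M$, which is harmless). The one point where you are actually more explicit than the paper is the construction of the countable approximating family via separability of $C(K_{m})$ and Tietze extension, which the paper merely asserts to exist.
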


\begin{proof}
Let $K_{i}$, $i=1,2,\cdots$, be a sequence of compact subsets in
$\Omega$ such that $K_{i}\subset K_{i+1}$ and
$\mathbb{\hat{E}}[\mathbf{1}_{(K_{i}})^c]\leq1/i$. Let $\{
\varphi_{j}\}_{j=1}^{\infty}$ constitute a linear subspace of
$C_{b}(\Omega)$ such that, for each $K_{i}$, $\{ \varphi_{j}(x)\}_{j=1}%
^{\infty}|_{x\in K}$ is dense in $C_{b}(K_{i})$. Since $\{ \mathbb{E}%
_{\alpha_{n}}[\varphi_{1}]\}_{n=1}^{\infty}$ is a bounded sequence, there
exists a subsequence $\{n_{i}^{1}\}_{i=1}^{\infty}$ of $n=1,2,\cdots$, such
that $\{ \mathbb{E}_{\alpha_{n_{i}^{1}}}[\varphi_{1}]\}_{i=1}^{\infty}$ is a
Cauchy sequence. Similarly, there exists a \ subsequence $\{n_{i}^{2}%
\}_{i=1}^{\infty}$ of $\{n_{i}^{1}\}_{i=1}^{\infty}$ such that $\{
\mathbb{E}_{\alpha_{n_{i}^{2}}}[\varphi_{2}]\}_{i=1}^{\infty}$ is a Cauchy
sequence. Proceeding in this way, we can find, for each $j=1,2,\cdots$, a
subsequence $\{n_{i}^{j}\}_{i=1}^{\infty}$ of $\{n_{i}^{j-1}\}_{i=1}^{\infty}$
such tat $\{ \mathbb{E}_{\alpha_{n_{i}^{j}}}[\varphi_{j}]\}_{i=1}^{\infty}$ is
a Cauchy sequence. It follows that, for the diagonal subsequence $\{n_{i}%
^{i}\}_{i=1}^{\infty}$ and for each fixed $j=1,2,\cdots$, $\{ \mathbb{E}%
_{\alpha_{n_{i}^{i}}}[\varphi_{j}]\}_{i=1}^{\infty}$ is a Cauchy sequence.

We now prove that, for each $\varphi \in C_{b}(\Omega)$, $\{ \mathbb{E}%
_{\alpha_{n_{i}^{i}}}[\varphi]\}_{i=1}^{\infty}$ is also a Cauchy
sequence. We denote $M=\sup_{\omega \in {\Omega}}|\varphi(\omega)|$.
For each $\varepsilon>0$, let $m$ be an integer larger than
$16M/\varepsilon$, thus
$\mathbb{\hat{E}}[\mathbf{1}_{(K_{m})^{c}}]\leq \varepsilon/16M$.
Let $\{
\varphi_{j_{l}}\}_{l=1}^{\infty}$ be a subsequence of $\{ \varphi_{j}%
\}_{j=1}^{\infty}$ such that $\sup_{l}\sup_{\omega \in \Omega}|\varphi_{i_{l}%
}(\omega)|\leq M$ and
\[
\lim_{l\rightarrow \infty}\sup_{\omega \in K_{m}}|\varphi_{i_{l}}(\omega
)-\varphi(\omega)|=0.
\]
Thus there exists an integer $l$ such that $\mathbb{\hat{E}}[|\varphi
-\varphi_{j_{l}}|\mathbf{1}_{K_{m}}]\leq \varepsilon/8$. It follows that
\begin{align*}
\mathbb{\hat{E}}[|\varphi-\varphi_{j}|]  &  \leq \mathbb{\hat{E}}%
[|\varphi-\varphi_{j_{l}}|\mathbf{1}_{(K_{m})^{c}}]+\mathbb{\hat{E}}%
[|\varphi-\varphi_{j_{l}}|\mathbf{1}_{K_{m}}]\\
&  \leq2M\varepsilon/16M+\mathbb{\hat{E}}[|\varphi-\varphi_{j_{l}}%
|\mathbf{1}_{K_{m}}]\\
&  \leq \varepsilon/4.\  \
\end{align*}
For this fixed $l$, there exists an integer $i_{0}$, such that $|\mathbb{E}%
_{\alpha_{i}^{i}}[\varphi_{j_{l}}]-\mathbb{E}_{\alpha_{j}^{j}}[\varphi_{j_{l}%
}]|\leq \varepsilon/2$, for each $i,j\geq i_{0}$. We then have
\begin{align*}
|\mathbb{E}_{\alpha_{n_{i}^{i}}}[\varphi]-\mathbb{E}_{\alpha_{n_{j}^{j}}%
}[\varphi]|  &  \leq \mathbb{\hat{E}}[|\varphi-\varphi_{j_{l}}|]+|\mathbb{E}%
_{\alpha_{n_{i}^{i}}}[\varphi_{j_{l}}]-\mathbb{E}_{\alpha_{n_{j}^{j}}}%
[\varphi_{j_{l}}]|+\mathbb{\hat{E}}[|\varphi-\varphi_{j_{l}}|]\\
&  \leq2\mathbb{\hat{E}}[|\varphi-\varphi_{j_{l}}|]+|\mathbb{E}_{\alpha
_{n_{i}^{i}}}[\varphi_{j_{l}}]-\mathbb{E}_{n_{j}^{j}}[\varphi_{j_{l}}%
]|\leq \varepsilon.\  \  \
\end{align*}
Thus $\{ \mathbb{E}_{\alpha_{n_{i}^{i}}}[\varphi]\}_{i=1}^{\infty}$ is a
Cauchy sequence. The proof is complete.
\end{proof}

\section{Convergence in distribution}

In this section we apply Theorem \ref{Theorem1} to give a probabilistic proof
of the central limit theorem under a sublinear expectation space.

\begin{definition}
Let $(\Omega,\mathcal{H},\mathbb{E})$ be a nonlinear expectation space and
$\left \{  \xi_{i}\right \}  _{i=1}^{\infty}$ be a sequence of $\mathbb{R}^{d}%
$-valued random variables in this space. $\left \{  \xi_{i}\right \}
_{i=1}^{\infty}$ is said to converge in distribution, or in law, if, for each
$\varphi \in C_{b}(\mathbb{R}^{d})$, $\{ \mathbb{E}[\varphi(\xi_{i}%
)]\}_{i=1}^{\infty}$ is a Cauchy sequence. We define
\[
\mathbb{F}_{\xi}[\varphi]=\lim_{i\rightarrow \infty}\mathbb{E}[\varphi(\xi
_{i})],\  \  \varphi \in C_{b}(\mathbb{R}^{d})\text{.}%
\]
$\mathbb{F}:C_{b}(\mathbb{R}^{d})\mapsto \mathbb{R}$ forms a nonlinear
expectation on $(\mathbb{R}^{d},C_{b}(\mathbb{R}^{d}))$.
\end{definition}

\begin{remark}
If $\mathbb{E}$ is a sublinear (resp. linear) expectation then $\mathbb{F}$ is
also a sublinear (resp. linear) on $(\mathbb{R}^{d},C_{b}(\mathbb{R}^{d}))$.
\end{remark}

\begin{lemma}
Let $(\bar{\Omega},\mathcal{\bar{H}},\mathbb{\bar{E}})$ be a sublinear
expectation space and let $\bar{X}=(\bar{X}_{1},\cdots,\bar{X}_{d})$ be such
that $\bar{X}_{1}$,$\cdots,\bar{X}_{d}\in \mathcal{\bar{H}}$ and $\psi(\bar
{X})\in \mathcal{\bar{H}}$ for each $\psi \in L_{0}(\mathbb{R}^{d})$ such that
$|\psi(x)|\leq c(1+|x|^{l})$, where $l>0$ is fixed. Then the sublinear
distribution on $(\mathbb{R}^{d},B_{b}(\mathbb{R}^{d}))$ defined by
$\mathbb{\bar{F}}_{\bar{X}}[\varphi]:=\mathbb{\bar{E}}[\varphi(\bar{X}%
)]:B_{b}(\mathbb{R}^{d})\mapsto \mathbb{R}$ is tight.
\end{lemma}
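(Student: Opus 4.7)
The plan is to reduce tightness of $\bar{\mathbb{F}}_{\bar{X}}$ on $\mathbb{R}^d$ to a Markov-type inequality under $\bar{\mathbb{E}}$, exploiting the polynomial-growth hypothesis on $\bar{X}$. Since the admissible $\psi$ are Borel functions with $|\psi(x)|\le c(1+|x|^l)$, I would start by taking $\psi_0(x):=|x|^l$, which is Borel with $|\psi_0(x)|\le 1+|x|^l$, so the assumption gives $|\bar{X}|^l\in\bar{\mathcal{H}}$. Because $\bar{\mathbb{E}}$ is an $\mathbb{R}$-valued functional on $\bar{\mathcal{H}}$, the quantity $M:=\bar{\mathbb{E}}[|\bar{X}|^l]$ is a finite real number; this is the one scalar input the rest of the argument will need.

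Next, for each $R>0$ I would take the closed ball $K_R:=\{x\in\mathbb{R}^d:|x|\le R\}$, which is compact in $\mathbb{R}^d$. The function $\mathbf{1}_{K_R^c}$ is Borel and bounded by $1$, hence satisfies the polynomial growth bound trivially, so by hypothesis $\mathbf{1}_{K_R^c}(\bar{X})\in\bar{\mathcal{H}}$ and $\bar{\mathbb{F}}_{\bar{X}}[\mathbf{1}_{K_R^c}]=\bar{\mathbb{E}}[\mathbf{1}_{K_R^c}(\bar{X})]$ is well defined. Pointwise on $\bar{\Omega}$ one has the elementary inequality
\[
\mathbf{1}_{K_R^c}(\bar{X})\;\le\;R^{-l}\,|\bar{X}|^l,
\]
so by monotonicity and positive homogeneity of the sublinear expectation $\bar{\mathbb{E}}$,
\[
\bar{\mathbb{F}}_{\bar{X}}[\mathbf{1}_{K_R^c}]\;\le\;R^{-l}\,\bar{\mathbb{E}}[|\bar{X}|^l]\;=\;R^{-l}M.
\]
Given $\varepsilon>0$, choosing $R$ so that $R^{-l}M<\varepsilon$ produces the compact set $K=K_R$ required by the definition of tightness.

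There is no substantial obstacle here; the only points that need to be checked carefully are that (i) $|\bar{X}|^l$ and $\mathbf{1}_{K_R^c}(\bar{X})$ both lie in $\bar{\mathcal{H}}$, which follows directly from the polynomial-growth admissibility condition on $\psi$, and (ii) the Markov-type step is valid under a sublinear (rather than linear) expectation, which uses only monotonicity and positive $1$-homogeneity, both of which are part of the definition of a sublinear expectation. The argument is uniform in the sense that the compact set $K_R$ depends only on the single number $M=\bar{\mathbb{E}}[|\bar{X}|^l]$, which is the ingredient that lets this same template be used later for the tightness of the families $\{\bar{\mathbb{F}}_{S_n/\sqrt{n}}\}$ appearing in the CLT application.
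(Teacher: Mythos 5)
Your proposal is correct and follows essentially the same route as the paper: a Markov/Chebyshev-type estimate $\bar{\mathbb{F}}_{\bar{X}}[\mathbf{1}_{K_R^c}]\le R^{-l}\bar{\mathbb{E}}[|\bar{X}|^l]$ over the closed ball $K_R$, with $R$ chosen so that this is below $\varepsilon$. Your added checks that $|\bar{X}|^l$ and $\mathbf{1}_{K_R^c}(\bar{X})$ lie in $\bar{\mathcal{H}}$ are exactly the points the paper leaves implicit.
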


\begin{proof}
For each $\varepsilon>0$, we can choose an integer $N>0$ such that
$N^{l}>\mathbb{\bar{E}}[|\bar{X}|^{l}]/\varepsilon$ and a compact
set $K=\{x\in \mathbb{R}^{d}:|x|\leq N\}$. Since for each
\[
\mathbb{\bar{F}}_{{\bar X}}[\mathbf{1}_{K^{c}}]=\mathbb{\bar{E}}[\mathbf{1}%
_{\{|{\bar X}|>N\}}]\leq \frac{1}{N^{l}}\mathbb{\bar{E}}[|{\bar
X}|^{l}]<\varepsilon.
\]
It follows that the sublinear distribution $\mathbb{\bar{F}}_{\bar{X}}$ of
$\bar{X}$ is tight. The proof is complete.
\end{proof}

\begin{theorem}
\label{Theorem2}Let $(\Omega,\mathcal{H},\mathbb{E})$ be a nonlinear
expectation space and let $\{X_{\alpha}\}_{\alpha \in \mathcal{A}}$ be a family
of $\mathbb{R}^{d}$-valued random variables such that%
\[
\mathbb{E}[\varphi(X_{\alpha})]-\mathbb{E}[\psi(X_{\alpha})]\leq
\mathbb{\bar{E}}[\varphi(\bar{X})-\psi(\bar{X})],\  \  \forall \alpha
\in \mathcal{A}%
\]
where $\bar{X}$ is as in the above lemma. Then the family of distributions $\{
\mathbb{F}_{X_{\alpha}}[\varphi]\}_{\alpha \in \mathcal{A}}$ on $(\mathbb{R}%
^{d},C_{b}(\mathbb{R}^{d}))$ is weakly compact.
\end{theorem}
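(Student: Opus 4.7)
The plan is to reduce this to Theorem \ref{Theorem1} applied to the family of pushforward (nonlinear) expectations
\[
\mathbb{F}_{X_{\alpha}}[\varphi]:=\mathbb{E}[\varphi(X_{\alpha})],\qquad \varphi\in C_{b}(\mathbb{R}^{d}),
\]
viewed as nonlinear expectations on $(\mathbb{R}^{d},C_{b}(\mathbb{R}^{d}))$. By the definition of tightness for a family of nonlinear expectations, it suffices to produce a single tight sublinear expectation $\hat{\mathbb{E}}$ on $(\mathbb{R}^{d},B_{b}(\mathbb{R}^{d}))$ that dominates all differences $\mathbb{F}_{X_{\alpha}}[\varphi]-\mathbb{F}_{X_{\alpha}}[\psi]$.

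The candidate is exactly $\bar{\mathbb{F}}_{\bar{X}}$. First I would invoke the preceding lemma on the single random variable $\bar{X}$: its hypothesis (that $\psi(\bar{X})\in\bar{\mathcal{H}}$ for all Borel $\psi$ with $|\psi(x)|\le c(1+|x|^{l})$) in particular guarantees $\bar{\mathbb{E}}[|\bar{X}|^{l}]<\infty$, so the lemma delivers a sublinear expectation $\bar{\mathbb{F}}_{\bar{X}}$ on $(\mathbb{R}^{d},B_{b}(\mathbb{R}^{d}))$ which is tight in the sense of the Definition.

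Next I would verify the required domination. For any $\varphi,\psi\in C_{b}(\mathbb{R}^{d})$, the hypothesis of the theorem, together with the identity $\bar{\mathbb{F}}_{\bar{X}}[\eta]=\bar{\mathbb{E}}[\eta(\bar{X})]$ for bounded Borel $\eta$, gives
\[
\mathbb{F}_{X_{\alpha}}[\varphi]-\mathbb{F}_{X_{\alpha}}[\psi]=\mathbb{E}[\varphi(X_{\alpha})]-\mathbb{E}[\psi(X_{\alpha})]\leq\bar{\mathbb{E}}[\varphi(\bar{X})-\psi(\bar{X})]=\bar{\mathbb{F}}_{\bar{X}}[\varphi-\psi],
\]
which is precisely the inequality demanded in the tightness definition with $\hat{\mathbb{E}}=\bar{\mathbb{F}}_{\bar{X}}$. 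Consequently $\{\mathbb{F}_{X_{\alpha}}\}_{\alpha\in\mathcal{A}}$ is a tight family of nonlinear expectations on $(\mathbb{R}^{d},C_{b}(\mathbb{R}^{d}))$, and Theorem \ref{Theorem1} immediately yields weak compactness.

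There is essentially no obstacle here; the theorem is a packaging result that combines the tightness lemma for a single random variable with the abstract weak compactness criterion. The only thing to be careful about is the typing: $\bar{\mathbb{F}}_{\bar{X}}$ must be realized on bounded \emph{Borel} functions (not only on continuous ones) so that $\varphi-\psi$, which lies in $C_{b}\subset B_{b}$, is in its domain — and this is exactly the framework the lemma provides.
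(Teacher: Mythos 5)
Your proposal is correct and follows essentially the same route as the paper: take $\hat{\mathbb{E}}=\bar{\mathbb{F}}_{\bar{X}}$ on $(\mathbb{R}^{d},B_{b}(\mathbb{R}^{d}))$, which the preceding lemma shows is tight, note that the hypothesis is exactly the domination inequality $\mathbb{F}_{X_{\alpha}}[\varphi]-\mathbb{F}_{X_{\alpha}}[\psi]\leq\bar{\mathbb{F}}_{\bar{X}}[\varphi-\psi]$, and conclude by Theorem \ref{Theorem1}. Your remark about realizing $\bar{\mathbb{F}}_{\bar{X}}$ on bounded Borel functions so that $\varphi-\psi\in C_{b}\subset B_{b}$ lies in its domain is exactly the point the paper's setup is designed to handle.
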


\begin{proof}
We denote by $L_{l}(\mathbb{R}^{d})$ the space of all $L_{0}(\mathbb{R}^{d}%
)$-measurable functions such that $|\psi(x)|\leq c(1+|x|^{l})$. It is clear
that, for each $n=1,2,\cdots$, $\mathbb{F}_{\xi_{n}}[\varphi]:=\mathbb{E}%
[\varphi(\xi_{n})]:L_{l}(\mathbb{R}^{d})\mapsto \mathbb{R}$ forms a nonlinear
distribution on $(\mathbb{R}^{d},L_{l}(\mathbb{R}^{n}))$. Moreover
\[
\mathbb{F}_{X_{\alpha}}[\varphi]-\mathbb{F}_{X_{\alpha}}[\psi]\leq
\mathbb{\bar{F}}_{\bar{X}}[\varphi-\psi],\  \  \varphi,\psi \in C_{b}%
(\mathbb{R}^{d}),\  \alpha \in \mathcal{A}.
\]
Since by the above lemma $\mathbb{\bar{F}}_{\bar{X}}$ is tight. It follows
from Theorem \ref{Theorem1} and the above lemma that $\{ \mathbb{F}%
_{X_{\alpha}}[\varphi]\}_{\alpha \in \mathcal{A}}$ is weakly compact. The proof
is complete.
\end{proof}

Let $S$ be a complete separable metric space. We denote by $C_{b}(S)$, the
linear space of all real valued bounded and continuous functions on $S$. We
can also apply Theorem \ref{Theorem1} to random variables on a nonlinear
expectation space with values on $S$. A typical situation is $S=C([0,\infty
),\mathbb{R}^{d})$.

\begin{definition}
Let $(\Omega,\mathcal{H},\mathbb{E})$ be a nonlinear expectation space. A
mapping $\xi:\Omega \mapsto S$ is said to be an $S$-valued random variable if
$f(\xi)\in \mathcal{H}$ for each $f\in C_{b}(S\mathcal{)}$.
\end{definition}

\begin{definition}
Let $(\Omega,\mathcal{H},\mathbb{E})$ be a nonlinear expectation space and
$\left \{  \xi_{i}\right \}  _{i=1}^{\infty}$ be a sequence of $S$-valued random
variables in this space. $\left \{  \xi_{i}\right \}  _{i=1}^{\infty}$ is said
to be convergent in distribution, or in law if for each $\varphi \in C_{b}(S)$,
the sequence $\{ \mathbb{E}[\varphi(\xi_{i})]\}_{i=1}^{\infty}$ is Cauchy. We
define
\[
\mathbb{F}[\varphi]=\lim_{n\rightarrow \infty}\mathbb{E}[\varphi(\xi
_{i})],\  \  \varphi \in C_{b}(S)\text{.}%
\]
$\mathbb{F}:C_{b}(S)\mapsto \mathbb{R}$ forms a nonlinear expectation on
$(S,C_{b}(S))$.
\end{definition}

\section{Central limit theorem under sublinear expectations}

Let $(\Omega,\mathcal{H},\mathbb{\hat{E}})$ be a sublinear
expectation space. We consider an i.i.d. sequence $\left \{
X_{i}\right \} _{i=1}^{\infty}$
valued in $\mathbb{R}^{d}$ such that $\mathbb{\hat{E}}[X_{1}]=\mathbb{\hat{E}%
}[-X_{1}]=0$, and $\mathbb{\hat{E}}[|X_{1}|^{2+\beta}]<\infty$ for some
constant $\beta>0$. The following result was obtained in [Peng2007] (see also
[Peng2008, 2010]) using a very deep result of partial differential equation.
We shall give a short and probabilistic proof.

\begin{theorem}
Let $S_{n}=X_{1}+\cdots+X_{n}$. The sequence $\left \{  n^{-1/2}S_{n}\right \}
_{n=1}^{\infty}$ converges in distribution to $\xi$ satisfying
\[
\mathbb{E}[\xi]=0,\  \  \mathbb{E}[\left \langle A\xi,\xi \right \rangle ]=\frac
{1}{2}G(A):=\mathbb{E}[\left \langle AX_{1},X_{1}\right \rangle ],\  \ A\in
\mathbb{S}(d).
\]
and
\[
a\xi+b\bar{\xi}\sim \sqrt{a^{2}+b^{2}}\xi,\  \  \ a,b\geq0.
\]
Namely, the limit $\xi$ is $G$-normal.
\end{theorem}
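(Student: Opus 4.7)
The plan is to prove convergence in two stages. First, I would use Theorem \ref{Theorem2} to show that the family of distributions $\{\mathbb{F}_{n^{-1/2}S_n}\}$ is weakly compact; second, I would identify any subsequential limit $\mathbb{F}$ as a $G$-normal distribution by exploiting the i.i.d.\ structure to derive its self-similarity and its first two moments. Uniqueness of the $G$-normal law with prescribed $G$ then forces the whole sequence to converge to the same limit $\xi$.

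For the first stage I need a uniform moment bound. Using the sublinearity of $\hat{\mathbb{E}}$, the independence (in the sublinear sense) of the $X_i$'s, and the centering hypothesis $\hat{\mathbb{E}}[X_1]=\hat{\mathbb{E}}[-X_1]=0$, one obtains inductively $\hat{\mathbb{E}}[|n^{-1/2}S_n|^{2+\beta}]\leq C$, because the first-order cross terms that appear when expanding $|S_n|^{2+\beta}$ vanish under the centering, while higher-order contributions are controlled by $\hat{\mathbb{E}}[|X_1|^{2+\beta}]$. Taking $\bar X$ on a suitable extension with $\hat{\mathbb{E}}[|\bar X|^{2+\beta}]\geq C$ and checking the dominance $\hat{\mathbb{E}}[\varphi(n^{-1/2}S_n)]-\hat{\mathbb{E}}[\psi(n^{-1/2}S_n)]\leq\hat{\mathbb{E}}[\varphi(\bar X)-\psi(\bar X)]$ places us in the hypothesis of Theorem \ref{Theorem2}, which yields weak compactness. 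Along a suitable subsequence $(n_k)$ one then has $\hat{\mathbb{E}}[\varphi(n_k^{-1/2}S_{n_k})]\to\mathbb{F}[\varphi]$ for every $\varphi\in C_b(\mathbb{R}^d)$.

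For the identification, I would split $S_{2n_k}=S'+S''$ with $S'=X_1+\cdots+X_{n_k}$ and $S''=X_{n_k+1}+\cdots+X_{2n_k}$, so that $(2n_k)^{-1/2}S_{2n_k}=\tfrac{1}{\sqrt{2}}(n_k^{-1/2}S'+n_k^{-1/2}S'')$. The two blocks are i.i.d.\ in the sublinear sense. Joint tightness on $\mathbb{R}^{2d}$ is an immediate consequence of the marginal estimate, so after passing to a further subsequence the joint distribution converges to that of a pair $(\xi_1,\xi_2)$ of independent copies of $\xi$. Passing to the limit gives $\mathbb{F}[\varphi(\xi)]=\mathbb{F}[\varphi(\tfrac{1}{\sqrt{2}}(\xi_1+\xi_2))]$. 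Iterating with $m$-block decompositions and combining different scales produces $a\xi+b\bar\xi\sim\sqrt{a^2+b^2}\,\xi$ first for rational and then, by continuity, for all $a,b\geq 0$. The identities $\mathbb{F}[\pm\langle y,\xi\rangle]=0$ and $\mathbb{F}[\langle A\xi,\xi\rangle]=\tfrac{1}{2}G(A)$ are read off by passing to the limit in the corresponding uniform bounds; the cross terms in $\langle AS_n,S_n\rangle$ again vanish because the $X_i$'s are centered and independent.

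The decisive obstacle is the passage to the joint limit on $\mathbb{R}^{2d}$: one must guarantee that the sublinear-independence of $S'$ and $S''$ transfers, in the weak limit, to a genuine independence between $\xi_1$ and $\xi_2$ in the nonlinear expectation sense, so that the expression $\mathbb{F}[\varphi(\tfrac{1}{\sqrt{2}}(\xi_1+\xi_2))]$ can be interpreted as an iterated application of $\mathbb{F}$. This requires a joint version of Theorem \ref{Theorem1} on the product space, together with a direct verification that $\hat{\mathbb{E}}[\Phi(n_k^{-1/2}S',n_k^{-1/2}S'')]$ factorizes through the one-dimensional limit. Once this is in place, the remaining identification and the appeal to uniqueness of the $G$-normal distribution (itself provable probabilistically by iterating the self-similarity and extracting convergent subsequences) complete the argument.
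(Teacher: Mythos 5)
Your first stage is essentially the paper's: tightness of the family of laws of $n^{-1/2}S_{n}$ follows from a uniform moment bound plus Theorem \ref{Theorem1}/\ref{Theorem2}, and in fact only the second moment is needed, since independence and $\mathbb{\hat{E}}[X_{1}]=\mathbb{\hat{E}}[-X_{1}]=0$ give the exact identity $\mathbb{\hat{E}}[|n^{-1/2}S_{n}|^{2}]=\mathbb{\hat{E}}[|X_{1}|^{2}]$; your claimed uniform $(2+\beta)$-moment bound is not obtained by ``vanishing cross terms'' (the function $|x|^{2+\beta}$ is not a polynomial, so no such cancellation is available without a Rosenthal-type inequality), and the dominating variable $\bar{X}$ need not be constructed at all: the paper simply takes $\mathbb{\bar{F}}[\varphi]=\sup_{n}\mathbb{\hat{E}}[\varphi(n^{-1/2}S_{n})]$ as the dominating tight sublinear expectation.

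The genuine gap is in your identification step. You write $(2n_{k})^{-1/2}S_{2n_{k}}=\tfrac{1}{\sqrt{2}}\left(n_{k}^{-1/2}S'+n_{k}^{-1/2}S''\right)$ and then ``pass to the limit'' to get $\xi\sim\tfrac{1}{\sqrt{2}}(\xi_{1}+\xi_{2})$. But convergence to $\xi$ has only been established along the subsequence $(n_{k})$; there is no reason why $(2n_{k})^{-1/2}S_{2n_{k}}$ (or $(mn_{k})^{-1/2}S_{mn_{k}}$) should converge to the same law $\xi$, since different scales may have different subsequential limits at this stage. This scale-mismatch is precisely the obstruction the paper's proof is designed to avoid: it goes in the opposite direction, splitting $n_{i}^{-1/2}S_{n_{i}}$ into two half-blocks and extracting nested further subsequences, thereby constructing on one sublinear expectation space (via the product-space technique, which also settles the independence-in-the-limit issue you flag) a family $\xi_{t}$, $t\in\mathbf{Q}[0,1]$, with stationary independent increments satisfying (\ref{3.1}). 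The relation obtained this way is $\xi_{1}\sim\tfrac{1}{\sqrt{2}}(\xi_{1/2}+\xi'_{1/2})$, not the self-similarity of $\xi_{1}$ itself, so the stability relation $a\xi+b\bar{\xi}\sim\sqrt{a^{2}+b^{2}}\,\xi$ cannot be read off directly; the paper instead identifies the limit by showing that $u(t,x)=\mathbb{E}[\varphi(x+\xi_{t})]$ satisfies the dynamic programming relation (\ref{3.2}) and is a viscosity solution of $\partial_{t}u-G(D^{2}u)=0$, and then invokes uniqueness (comparison) for this $G$-heat equation to conclude that $\xi_{1}$ is $G$-normal, whence all subsequential limits coincide and the whole sequence converges. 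Your closing claim that uniqueness of the $G$-normal law is ``itself provable probabilistically by iterating the self-similarity and extracting convergent subsequences'' is exactly the missing ingredient, not a routine remark; without either that argument or the viscosity-solution characterization, the proposal does not close. (A further, smaller, omission: the moment identities $\mathbb{E}[\langle A\xi,\xi\rangle]=2G(A)$ involve unbounded test functions, so passing to the limit requires a uniform-integrability argument beyond convergence in law tested on $C_{b}$.)
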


\begin{proof}
We set%
\[
\mathbb{\bar{F}}[\varphi]=\sup_{n}\mathbb{\hat{E}}\left[  \varphi
(n^{-1/2}S_{n})\right]  ,\  \  \varphi \in C_{b}(\mathbb{R}^{d}).
\]
$\mathbb{\bar{F}}$ is also a sublinear expectation on $(\mathbb{R}^{d}%
,C_{b}(\mathbb{R}^{d}))$ satisfying
\[
\mathbb{\hat{E}}\left[  \varphi(n^{-1/2}S_{n})\right]  -\mathbb{\hat{E}%
}\left[  \psi(n^{-1/2}S_{n})\right]  \leq \mathbb{\bar{F}}[\varphi
-\psi],\  \varphi,\psi \in C_{b}(\mathbb{R}^{d}).\
\]
Since $\mathbb{\hat{E}}\left[  \left \vert n^{-1/2}S_{n}\right \vert
^{2}\right]  =\mathbb{\hat{E}}[|X_{1}|^{2}]$, it is clear that, for
$\varphi=x^{2}$,
\[
\mathbb{\bar{F}}[|x|^{2}]=\mathbb{\hat{E}}[|X_{1}|^{2}]<\infty.
\]%
\[
.
\]
By Theorem \ref{Theorem1}, there exists a subsequence $\left \{  n_{i}%
^{-1/2}S_{n_{i}}\right \}  _{i=1}^{\infty}$ of $\left \{  n^{-1/2}S_{n}\right \}
_{n=1}^{\infty}$ which converges in law to some $\xi_{1}$ in $(\Omega
,\mathcal{H},\mathbb{E}\mathbb{)}$. We also have %
\[
\mathbb{E}[\xi_{1}]=\mathbb{E}[-\xi_{1}]=0,\  \  \frac{1}{2}\mathbb{E}%
[\left \langle A\xi_{1},\xi_{1}\right \rangle ]=\frac{1}{2}\mathbb{E}%
[\left \langle AX_{1},X_{1}\right \rangle ]=G(A).
\]
We will prove that $\xi_{1}$ is $G$-normal distributed for the above $G$.
Since this distribution is uniquely defined, thus we can prove that, in fact,
the sequence $\{ \mathbb{\hat{E}}\left[  \varphi(n^{-1/2}S_{n})\right]
\}_{n=1}^{\infty}$ converges in law to the $G$-normal distribution.

For the above convergent subsequence $\left \{  n_{i}^{-1/2}S_{n_{i}}\right \}
_{i=1}^{\infty}$, it is clear that for an arbitrarily increasing integers of
$\{ \bar{n}_{i}\}_{i=1}^{\infty}$ such that $|\bar{n}_{i}-n_{i}|\leq1$, both
$\left \{  n_{i}^{-1/2}S_{n_{i}}\right \}  _{i=1}^{\infty}$ and $\left \{
\bar{n}_{i}^{-1/2}S_{\bar{n}_{i}}\right \}  _{i=1}^{\infty}$ converges in law
to a same limit. Thus, without loss of generality, we can assume that $n_{i}$,
$i=1,2,\cdots$, are all even numbers and decompose $n_{i}^{-1/2}S_{n_{i}}$
into two parts:
\[
n_{i}^{-1/2}S_{n_{i}}=\frac{1}{\sqrt{2}}(n_{i}/2)^{-1/2}S_{\frac{n_{i}}{2}%
}+\frac{1}{\sqrt{2}}(n_{i}/2)^{-1/2}(S_{n_{i}}-S_{\frac{n_{i}}{2}})
\]
We then use the same argument to each part and prove that there
exists a subsequence of
$\{\frac{1}{\sqrt{2}}(n_{i}/2)^{-1/2}S_{\frac{n_{i}}{2}}\}_{i=1}^{\infty}$
(resp.
$\{\frac{1}{\sqrt{2}}(n_{i}/2)^{-1/2}(S_{n^{i}}-S_{\frac{n_{i}}{2}})\}_{i=1}^{\infty}$)
converging in law to $\xi_{1/2}$ such that $\xi_{1}-\xi_{1/2}$ is
identically distributed with respect to $\xi_{1/2}$ as well as
independent of $\xi_{1/2}$. We set $\xi_{0}=0$. It is easy to check
that
\begin{equation}
\mathbb{E}[\xi_{t}]=\mathbb{E}[-\xi_{t}]=0,\  \  \frac{1}{2}\mathbb{E}%
[\left \langle A\xi_{t},\xi_{t}\right \rangle ]=G(A)t,\ A\in \mathbb{S}%
(d),\  \label{3.1}%
\end{equation}
for $t=0,\frac{1}{2},1$.

We can continue this procedure to prove that there exist random variables
$\xi_{t}$, $t=i/4$, $i=0,1,\cdots,4$, such that (\ref{3.1}) holds for each $t$
and each increment $\xi_{t}-\xi_{s}$ is identically distributed as $\xi_{t-s}$
and independent of $\xi_{t_{1}}$, $\xi_{t_{2}}$, for $t,s,t_{1},t_{2}=i/4$,
$i=0,1,\cdots,4$ and $t\geq s\geq t_{i}$, $i=1,2$. Here, using the technique
of the product of expectation spaces (see [Peng2010]), we always use the same
sublinear expectation space $(\Omega,\mathcal{H},\mathbb{E})$ like in the
classical situation.

We proceeding in this way to prove that we can find $\xi_{t}\in \mathcal{H}$,
$t\in \mathbf{Q}[0,1]$ ($\mathbf{Q}[0,1]$ denotes the set of all rational
numbers in $[0,1]$) such that $\xi_{t+h}-\xi_{t}$ is identically distributed
with respect to $\xi_{h}$ and is independent of $\xi_{t_{1}}$, $\xi_{t_{2}}%
$,$\cdots,\xi_{t_{k}}$, for each $t$, $s$, $h$, $t_{i}$ in $[0,1]$ such that
$t\geq t_{i}$, $i=1,2,\cdots k$. $t+h\in \lbrack0,1]$. We also have (\ref{3.1})
for $t\in \mathbf{Q}[0,1]$.

It remains to prove that $\xi_{1}$ is $G$-normally distributed. The
method is very similar to that of Theorem 1.6 in [Peng2010, ChIII].
We only give a sketch of the proof.

We define, for a bounded and Lipschitz function $\varphi$,%
\[
u(t,x)=\mathbb{E}[\varphi(x+\xi_{t})],\  \ x\in \mathbb{R}^{d},\  \ t\in
\mathbf{Q}[0,1].
\]
It is easy to check that $u(t,x)$ is a Lipschitz function of $x$ with the same
Lipschitz constant as $\varphi$ and, for each $t$, $s$ $\in \mathbf{Q}[0,1]$
such that $t+s\leq1$,
\begin{equation}
u(t+s,x)=\mathbb{E}[u(t,x+\xi_{s})],\  \ x\in \mathbb{R}^{d}.\  \  \label{3.2}%
\end{equation}
From which we can prove that%
\begin{align*}
|u(t+s,x)-u(t,x)|  &  =\mathbb{E}[|u(t,x+\xi_{s})-u(t,x)|]\\
&  \leq C\mathbb{E}[|\xi_{s}|]\leq \bar{C}\sqrt{s}.
\end{align*}
We then continuously extend the domain $u(t,x)$ from $\mathbf{Q}%
[0,1]\times \mathbb{R}^{d}$ to $[0,1]\times \mathbb{R}^{d}$. Relation
(\ref{3.2}) still holds for $t\in \lbrack0,1)$, $s\in \mathbf{Q}[0,1]$ such that
$t+s\leq1$.

For each $\bar{x}\in \mathbb{R}^{d}$, $\bar{t}\in \lbrack0,1)$, if $\psi$ is a
smooth function on $[0,1]\times \mathbb{R}^{d}$ such that $\psi \geq u$ and
$\psi(\bar{t},\bar{x})=u(\bar{t},\bar{x})$. From (\ref{3.2}) we have, for a
sufficiently small rational number $s>0$,%
\[
\partial \psi(\bar{t},\bar{x})s-\mathbb{E[}\left \langle D\psi(\bar{t},\bar
{x}),\xi_{s}\right \rangle +\frac{1}{2}\left \langle D^{2}\psi(\bar{t},\bar
{x})\xi_{s},\xi_{s}\right \rangle ]+o(s)\leq0\text{.}%
\]
Thus
\[
\partial_{t}\psi(\bar{t},\bar{x})-G(D^{2}\psi(\bar{t},\bar{x}))\leq
0,\  \ u|_{t=0}=\varphi.
\]
This implies $u$ is a viscosity subsolution of the PDE%
\[
\partial_{t}u-G(D^{2}u)=0.
\]
Similarly we can also prove that $u$ is a viscosity supersolution.
Thus $\xi_{1}$ is $G$-normally distributed. The proof is complete.
\end{proof}

\begin{remark}
The above probabilistic proof of central limit theorem can be also applied to
prove the following more general situation. We assume that $\{(X_{i}%
,Y_{i})\}_{i=1}^{\infty}$ is an i.i.d sequence in the following sense: for
each $i$, $(X_{i+1},Y_{i+1})$ is identically distributed with respect to
$(X_{1},Y_{1})$ and is independent of $(X_{1},Y_{1})$, $\cdots,(X_{i},Y_{i})$,
we also assume that $\mathbb{E}[X_{1}]=\mathbb{E}[-X_{1}]=0$, and $|X_{i}%
|^{3}$, $|Y_{i}|^{2}\in \mathcal{H}$. Then we have
\[
\sum_{i=1}^{n}(\frac{X_{i}}{\sqrt{n}}+\frac{Y_{i}}{n})\overset{\text{in law}%
}{\rightarrow}(\xi,\eta)
\]
where $(\xi,\eta)$ is $G$-distributed with
\[
G(p,A):=\mathbb{E}[\left \langle p,Y_{1}\right \rangle +\frac{1}{2}\left \langle
AX_{1},X_{1}\right \rangle ].
\]
This implies that the function $u(t,x):=\mathbb{E}[\varphi(x+\sqrt{t}\xi
+t\eta)]$ is the unique solution of the PDE
\[
\partial_{t}u-G(Du,D^{2}u)=0,\  \ u|_{t=0}=\varphi.
\]

\end{remark}

\begin{remark}
The above result provides a new method to the proof of the existence of
solutions for a type of degenerate fully nonlinear and possibly degenerate
PDEs. It also indicates a numerical method to solve such type of PDE. This
approach is quite different from the well-known approach of Perron's method
for the existence. Perron's method is note constructive.
\end{remark}

\begin{remark}
The above method can be also applied to treat the solution existence of the
following type of SDE
\[
dX_{t}=b(X_{t})dt+\beta(X_{t})d\left \langle B\right \rangle _{t}+\sigma
(X_{t})dB_{t}%
\]
for the case when $b$, $\beta$ and $\sigma$ are only continuous.
\end{remark}

\end{document}